\DeclareMathOperator{\real}{Re}
\DeclareMathOperator{\impart}{Im}
\def\up{\mathbb{C}^+}
\def\eqref#1{$(\ref{#1})$}
\newenvironment{proof}{\noindent {\em {Proof}}.}{$\square$
\medskip}
\newtheorem{theorem}{Theorem}[section]
\newtheorem{corollary}[theorem]{Corollary}
\newtheorem{lemma}[theorem]{Lemma}
\newtheorem{remark}[theorem]{Remark}
\newtheorem{proposition}[theorem]{Proposition}
\newtheorem{example}[theorem]{Example}
\newtheorem{question}[theorem]{Question}
\newtheorem{problem}[theorem]{Problem}
\newcommand{\olz}{\overline z}
\newcommand{\disk}{\mathbb{D}}
\newcommand \ol {\overline}
\newcommand{\bt}[1]{\begin{theorem}\label{#1}}
\newcommand{\bc}[1]{\begin{corollary}\label{#1}}
\newcommand{\bl}[1]{\begin{lemma}\label{#1}}
\newcommand{\bp}[1]{\begin{proposition}\label{#1}}
\newcommand{\be}[1]{\begin{example}\rm\label{#1}}
\newcommand{\bq}[1]{\begin{question}\rm\label{#1}}
\newcommand{\bprob}[1]{\begin{problem}\rm\label{#1}}
\newcommand{\beq}[1]{\begin{eqnarray}\label{#1}}
\newcommand{\br}[1]{\begin{remark}\rm\label{#1}}
\newcommand{\el}{\end{lemma}}
\newcommand{\ep}{\end{proposition}}
\newcommand{\ee}{\end{example}}
\newcommand{\eq}{\end{question}}
\newcommand{\eprob}{\end{problem}}
\newcommand{\eeq}{\end{eqnarray}}
\newcommand{\ed}{\end{definition}}
\newcommand{\et}{\end{theorem}}
\newcommand{\ec}{\end{corollary}}
\newcommand{\er}{\end{remark}}
\title
{Polyanalytic Besov spaces and approximation by dilatations}
\author{Ali Abkar
\vspace{0.5in}\\

Department of Pure Mathematics, Faculty of Science,\\
Imam Khomeini International University, Qazvin 34149, Iran\\
\\
\\
\texttt{Email:~abkar@sci.ikiu.ac.ir}\\}
\date{}
\begin{document}
\maketitle \textbf{Abstract.} Using partial derivatives $\partial_zf$ and $\partial_{\ol z}f$, we introduce Besov spaces of polyanalytic functions on the unit disk and on the upper half-plane. We then prove that the dilatations of each function in polyanalytic Besov spaces converge to the same function in norm. This opens the way for the norm approximation of functions in polyanalytic Besov spaces by polyanalytic polynomials.\\

\noindent\textbf{Keywords}: {Mean approximation, dilatation, polyanalytic function, polyanalytic Besov space}\\

\noindent \textbf{MSC(2020)}: 30E10, 30H20, 46E15\\

\textbf{}{}

\section{Introduction}
Let $\Omega$ be a domain in the complex plane. A function $f:\Omega\to \mathbb{C}$ is called $q-analytic$, for $q\in\mathbb{N}$, if $f\in C^q(\Omega)$ and
$$\big (\frac{\partial}{\partial \overline z}\big )^qf=\frac{1}{2^q}\Big (\frac{\partial}{\partial x}+i\frac{\partial}{\partial y}\Big )^q f=0,\quad z=x+iy.$$
A $q$-analytic function $f$ is called a polyanalytic function of degree $q$; or briefly a \textit{polyanalytic function}.
It is well-known that any polyanalytic function $f$ can be written as (see \cite{bal}):
\begin{equation}\label{rep1}
f(z)=h_0(z)+\olz h_1(z)+\cdots+\olz^{q-1}h_{q-1}(z),
\end{equation}
where $h_k,\, 0\le k\le q-1$, are called the analytic components of $f$.
Compared to analytic functions, polyanalytic functions might behave in a different manner; for example, they may vanish on a curve without vanishing identically in the whole domain (see \cite{abr}).\par
Let $L^p(\Omega, dA),\, 0< p<\infty$, denote the space of measurable functions on $\Omega$ equipped with the usual $L^p$ norm; here $dA$ is the area measure on $\Omega$. The polynalytic Bergman space $\mathcal{A}^p_q(\Omega)$ is defined as the space of $q$-analytic functions $f$ on $\Omega$ which belong to $L^p(\Omega)$ as well.
The norm of $f\in\mathcal{A}^p_q(\Omega)$ is given by
$$\| f\|_{\mathcal{A}^p_q(\Omega)}=\left (\int_\Omega |f(z)|^p dA(z)\right )^{1/p}<\infty.$$
A {\it weight} is a positive integrable function $w:\Omega\to [0,\infty )$. We say that $f$ belongs to the weighted polyanalytic Bergman space $\mathcal{A}^p_{q,w}(\Omega)$ if
$$\| f\|_{\mathcal{A}^p_{q,w}(\Omega)}=\left (\int_\Omega |f(z)|^p w(z)dA(z)\right )^{1/p}<\infty.$$
Sometimes we use the short term \textit{(weighted) poly-Bergman space} to refer to this space.
\par
In this paper, $\Omega$ is either the unit disk $\disk=\{z\in\mathbb{C}: |z|<1\}$ or the upper half-plane $\mathbb{C}^+$.
Let us begin by the unit disk.
We recall that the weighted Dirichlet space $D^p_w,\, 0<p<\infty$, consists of analytic functions in the unit disk for which
$$\|f\|^p_{D^p_w}=|f(0)|^p+\int_{\disk}|f^\prime(z)|^pw(z)dA(z)$$
is finite. Similarly, the weighted analytic Besov space $B_w^p$ consists of analytic functions $f$ on the unit disk for which the integral
$$\int_{\disk}(1-|z|^2)^{p-2}|f^\prime(z)|^p w(z)dA(z)<\infty.$$
In the following, we present a definition for the polyanalytic Besov space by using partial differential operators $\partial_zf$ and
$\partial_{\ol z}f$ instead of $f^\prime(z)$.
Let $f$ be a $q$-analytic function on the unit disk, and recall the differential operators ($z=x+iy$):
$$\partial_zf(z)=\frac{1}{2}\left(\frac{\partial f}{\partial x}-i\frac{\partial f}{\partial y}\right ),\quad
\partial_{\overline z}f(z)=\frac{1}{2}\left(\frac{\partial f}{\partial x}+i\frac{\partial f}{\partial y}\right ).$$
For a weight function $w$, we say that a $q$-analytic function $f$ belongs to the weighted polyanalytic Dirichlet space $\mathcal{D}^p_{q,w}$ if
$$\int_{\disk}\Big [|\partial_z f(z)|^p+|\partial_{\overline z} f(z)|^p\Big ]w(z)dA(z)<\infty.$$
The norm of a function in $\mathcal{D}^p_{q,w}$ is given by
$$\|f\|_{\mathcal{D}^p_{q,w}}=\left( |f(0)|^p+\int_{\disk}\Big [|\partial_zf(z)|^p+|\partial_{\overline z} f(z)|^p\Big ] w(z)dA(z)\right )^{1/p}.$$
When $p=2,\, \mathcal{D}^2_{q,w}$ is a Hilbert space of polyanalytic functions whose inner product is
$$\langle f,g\rangle_{\mathcal{D}^2_{q,w}}=\langle f(0), \overline{g(0)}\rangle +\int_{\disk}\Big [\partial_z f(z)\overline{\partial_z g(z)}
+\partial_{\overline z} f(z)\overline{\partial_{\overline z} g(z)}\Big ]
w(z)dA(z).$$
Similarly, we define the weighted polyanalytic Besov space $\mathcal{B}_{q,w}^p$ as the space of $q$-analytic functions $f$ on $\disk$ for which
$$\int_{\disk}(1-|z|^2)^{p-2}\Big [|\partial_zf(z)|^p+|\partial_{\overline z} f(z)|^p\Big ]w(z)dA(z)<\infty.$$
The norm of a function in $\mathcal{B}_{q,w}^p$ is given by
\begin{equation*}
\| f\|_{\mathcal{B}^p_{q,w}}=\left (|f(0)|^p+\int_{\disk}(1-|z|^2)^{p-2}\Big [|\partial_zf(z)|^p+|\partial_{\overline z} f(z)|^p\Big ]w(z) dA(z)\right )^{1/p}.
\end{equation*}
In case that $\Omega$ equals the upper half-plane $\mathbb{C}^+=\{z\in\mathbb{C}: \impart (z)>0\}$, the definition of polyanalytic Besov space on the upper half-plane is slightly different; just replace $f(0)$ by $f(i)$ and $dA(z)$ by a suitable Gaussian weight function (see \S 3). Note that when $q=1$ (which means that $f$ is analytic), we have $\partial_zf(z)=f^\prime(z)$, and $\partial_{\overline{z}}f(z)=0$,
so that the polyanalytic Besov (resp. Dirichlet) space reduces to the analytic Besov (resp. Dirichlet) space. Therefore, the spaces defined above are natural generalizations of the classical Dirichlet and Besov spaces.
We shall at times refer to a polyanalytic Besov space as a \textit{poly-Besov space}; likewise, a polyanalytic Dirichlet space is referred to as
\textit{poly-Dirichlet space}.
\par
It is a natural question in operator theory of function spaces to decide whether the polynomials are dense in the space or not. To answer this question, the most important strategy is to see if the dilatations
$$f_r(z)=f(rz),\quad z\in\Omega,\, 0<r<1,$$
are convergent to $f$. If this holds true, and if each $f_r$ can be approximated by the polynomials, then we are done.
We should point out that for weighted spaces with radial weights (weights that depend only to $|z|$) the problem is rather well-known.
In this paper we consider non-radial weights $w:\Omega\to (0,\infty)$ that satisfy the following condition:\\
There is a constant $C>0$, a non-negative integer $k$, and an $r_0\in (0,1)$ such that
\begin{equation}\label{condition1}
r^kw\left (\frac{z}{r}\right )\le Cw(z),\quad |z|<r,\, r_0\le r<1.
\end{equation}
Under this condition, we prove that the dilatations $f_r$
converge to $f$ in the norm of the weighted polyanalytic Besov space, as $r\to 1^-$.
If, moreover, we assume that the weight $w$ is chosen in such a way that the polyanalytic polynomials (polynomials in $z$ and $\ol z$) are included in the weighted Besov space, and if each $f_r$ can be approximated by the polyanalytic polynomials, then we are able to approximate each $f$ in the weighted Besov
space by polynomials of the form
$$p_{k,m}(z,\olz)=\sum_{i=0}^k\sum_{j=0}^m c_{i,j}z^i\olz^j,\quad c_{i,j}\in \mathbb{C}.$$

 The theory of polyanalytic functions has applications in signal analysis, and in Gabor frames. In particular, Hilbert spaces of polyanalytic functions were extensively used to model physical and engineering problems. The Bergman spaces of polyanalytic functions and the estimation of its reproducing kernels were recently studied by Haimi and Hedenmalm (\cite {hai1}, \cite{hai2}). A good account of polyanalytic Fock spaces of entire functions can be found in the expository article by L.D. Abreu and H.G. Feichtinger \cite{abr}. The subject has also attracted interest within operator theory of function spaces (see \cite{vas1}, \cite{vas2}).

\section{Polyanalytic Besov spaces in the unit disk}
We begin by approximation in polyanalytic Dirichlet spaces.
\begin{theorem}\label{limsup-dirichlet}
Let $0<p<\infty$ and let the weight $w$ satisfies the condition (\ref{condition1}).
Then the polyanalytic polynomials are dense in the weighted polyanalytic Dirichlet space $\mathcal{D}_{q,w}^p$.
\end{theorem}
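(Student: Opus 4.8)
The plan is to follow the dilatation strategy outlined in the introduction: first show that the dilatations $f_r(z)=f(rz)$ converge to $f$ in the norm of $\mathcal{D}^p_{q,w}$ as $r\to 1^-$, and then show that each fixed $f_r$ is approximable by polyanalytic polynomials. Writing $f$ through its analytic components as in (\ref{rep1}), the dilatation $f_r$ is again $q$-analytic with components $r^kh_k(r\cdot)$, and since $r$ is real the Wirtinger derivatives satisfy $\partial_z f_r(z)=r(\partial_z f)(rz)$ and $\partial_{\overline z}f_r(z)=r(\partial_{\overline z}f)(rz)$, while $f_r(0)=f(0)$. Thus everything reduces to the behaviour of the dilatation operator $T_r u(z)=u(rz)$ on the two functions $u=\partial_z f$ and $v=\partial_{\overline z}f$, both of which lie in $L^p(w\,dA)$ by the definition of $\mathcal{D}^p_{q,w}$. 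Since $w$ is integrable, $w\,dA$ is a finite measure, and every polyanalytic polynomial lies in $\mathcal{D}^p_{q,w}$, so the density statement is meaningful.

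The first and main step is to prove $\|T_r u-u\|_{L^p(w\,dA)}\to 0$. The key is a uniform bound for $T_r$ supplied by the weight condition (\ref{condition1}). Writing $\zeta=rz$, using $dA(z)=r^{-2}\,dA(\zeta)$ together with $w(\zeta/r)\le C r^{-k}w(\zeta)$ for $|\zeta|<r$, one obtains
$$\int_{\disk}|u(rz)|^p w(z)\,dA(z)=r^{-2}\int_{|\zeta|<r}|u(\zeta)|^p w(\zeta/r)\,dA(\zeta)\le C r^{-k-2}\int_{\disk}|u(\zeta)|^p w(\zeta)\,dA(\zeta),$$
so that $\|T_r u\|_{L^p(w\,dA)}\le M\|u\|_{L^p(w\,dA)}$ uniformly for $r_0\le r<1$, with $M=(Cr_0^{-k-2})^{1/p}$. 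With this uniform bound secured, the convergence follows by a standard density argument: given $\varepsilon>0$, choose $\phi$ continuous on $\overline{\disk}$ with $\|u-\phi\|_{L^p(w\,dA)}<\varepsilon$ (possible because $w\,dA$ is finite), split $T_r u-u=T_r(u-\phi)+(T_r\phi-\phi)+(\phi-u)$, estimate the first term by $M\varepsilon$ via the uniform bound, let the middle term tend to $0$ since $\phi(rz)\to\phi(z)$ uniformly on $\overline{\disk}$, and conclude that the $\limsup$ as $r\to1^-$ is $O(\varepsilon)$. (For $0<p<1$ one argues with the $p$-homogeneous metric $\|\cdot\|^p$, which still obeys the triangle inequality.) Applying this to $u=\partial_z f$ and $v=\partial_{\overline z}f$, and using $\partial_z f_r-\partial_z f=r(T_r u-u)+(r-1)u$ together with its analogue for $v$, both error terms vanish in $L^p(w\,dA)$, which yields $\|f_r-f\|_{\mathcal{D}^p_{q,w}}\to 0$.

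The second step is to approximate a fixed $f_r$ by polyanalytic polynomials. Since each $h_k$ is analytic on $\disk$, the component $g_k(z):=r^k h_k(rz)$ is analytic on the larger disk $|z|<1/r$, so its Taylor polynomials $P^{(k)}_N$ converge to $g_k$ uniformly on $\overline{\disk}$, and the derivatives $(P^{(k)}_N)'$ converge to $g_k'$ uniformly on $\overline{\disk}$ as well. Setting $p_N(z,\olz)=\sum_{k=0}^{q-1}\olz^k P^{(k)}_N(z)$ and using $\partial_z(\olz^k g)=\olz^k g'$ and $\partial_{\overline z}(\olz^k g)=k\olz^{k-1}g$ for analytic $g$, both $\partial_z(f_r-p_N)$ and $\partial_{\overline z}(f_r-p_N)$ are finite sums of the uniformly small quantities $\olz^k(g_k-P^{(k)}_N)'$ and $k\olz^{k-1}(g_k-P^{(k)}_N)$. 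Because $w\,dA$ is finite and $f_r(0)=p_N(0)$, it follows that $\|f_r-p_N\|_{\mathcal{D}^p_{q,w}}\to 0$. Combining the two steps through the triangle inequality (or its $p<1$ analogue) gives the density of polyanalytic polynomials in $\mathcal{D}^p_{q,w}$.

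I expect the principal obstacle to be the first step. Without some control on the weight, dilatation need not act boundedly on $L^p(w\,dA)$ and the mean convergence of $T_r u$ can fail; condition (\ref{condition1}) is precisely tailored to furnish the uniform bound above, and once that bound is in place the remainder is a routine density argument combined with the elementary fact that dilatation pushes the analytic components into functions holomorphic across the unit circle. The bulk of the technical care should therefore go into the change-of-variables estimate and the uniform boundedness of $T_r$, with the second step being essentially classical.
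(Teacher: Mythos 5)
Your proof is correct, and although it rests on the same change-of-variables estimate drawn from condition (\ref{condition1}) as the paper does, the mechanism by which you extract norm convergence is genuinely different. The paper applies the substitution $z\mapsto z/r$ to get $\int_\disk|\partial_zf_r|^pw\,dA\le Cr^{p-k-2}\int_{r\disk}|\partial_zf|^pw\,dA$, invokes the dominated convergence theorem to conclude $\limsup_{r\to1^-}\int_\disk|\partial_zf_r|^pw\,dA\le\int_\disk|\partial_zf|^pw\,dA$, and then passes in one line to $\int_\disk|\partial_zf_r-\partial_zf|^pw\,dA\to0$; that last step implicitly uses the standard Riesz-type lemma that almost-everywhere convergence (available here since $f\in C^q(\disk)$ makes $\partial_zf_r\to\partial_zf$ locally uniformly) together with $\limsup$ control of the $p$-norms forces $L^p$ convergence. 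You instead upgrade the very same estimate to a uniform operator bound $\|T_ru\|_{L^p(w\,dA)}\le M\|u\|_{L^p(w\,dA)}$ for $r_0\le r<1$ and run a three-epsilon argument through functions continuous on $\overline{\disk}$, which are dense in $L^p(w\,dA)$ because the weight is integrable; this buys a self-contained argument valid verbatim for all $0<p<\infty$ (with the $p$-homogeneous metric when $p<1$) that never needs pointwise convergence or the unstated lemma, while the paper's route is shorter once that lemma is granted. You also carry out in full the second half of the strategy---approximating each fixed $f_r$ by polyanalytic polynomials via Taylor polynomials of the dilated analytic components $r^kh_k(rz)$, which are holomorphic on the larger disk $|z|<1/r$---a step the paper merely asserts (it surfaces only in Section 4 as ``a consequence of (\ref{rep1})''); your observations that $f_r(0)=p_N(0)$ and that integrability of $w$ places the polyanalytic polynomials inside $\mathcal{D}^p_{q,w}$ correctly dispose of the remaining terms of the norm, so in this respect your write-up is more complete than the original.
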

\begin{proof}
It is enough to show that $f_r\to f$ in the norm of $\mathcal{D}_{q,w}^p$.
To this end, it suffices to work with the following semi-norm (the constant term does not play any role in approximation):
$$
\|f\|^p_{\mathcal{D}^p_{q,w}}=\int_{\disk}\Big [|\partial_zf(z)|^p + |\partial_{\ol z}f(z)|^p\Big ]  w(z)dA(z).
$$
We also note that
\begin{equation}\label{dir-1}
\|f_r -f\|^p_{\mathcal{D}^p_{q,w}}=\int_{\disk}|\partial_zf_r(z)-\partial_zf(z)|^p w(z)dA(z)+\int_{\disk}|\partial_{\ol z}f_r(z)-\partial_{\ol z}f(z)|^p w(z)dA(z).
\end{equation}
It is easy to see that
$$\partial_zf_r(z)=r\partial_zf(rz),\quad \partial_{\ol z}f_r(z)=r\partial_{\ol z}f(rz).$$
By a change of variable (replace $z$ by $z/r$) and using the assumption on the weight function, we obtain
\begin{align*}\int_\disk |\partial_zf_r(z)|^pw(z)dA(z)&=
r^{p-k-2}\int_{r\disk} |\partial_zf(z)|^p r^kw\left(\frac{z}{r} \right)dA(z)\\
&\le Cr^{p-k-2}\int_{r\disk} |\partial_zf(z)|^p w(z)dA(z).
\end{align*}
Therefore we can apply the dominated convergence theorem to get
$$\limsup_{r\to 1^-}\int_\disk |\partial_zf_r(z)|^pw(z)dA(z)\le \int_\disk |\partial_zf(z)|^pw(z)dA(z).$$
This implies that
$$\lim_{r\to 1^-}\int_{\disk}|\partial_zf_r(z)-\partial_zf(z)|^p w(z)dA(z)=0,$$
which means that the first term in \eqref{dir-1} tends to zero.
Similarly, we may verify that
$$\lim_{r\to 1^-}\int_{\disk}|\partial_{\ol z}f_r(z)-\partial_{\ol z}f(z)|^p w(z)dA(z)=0,$$
from which we obtain
$$\lim_{r\to 1^-}\|f_r -f\|^p_{\mathcal{D}^p_{q,w}}=0.$$
\end{proof}

We now prove a similar statement for polyanalytic Besov spaces.
\begin{theorem}\label{limsup-dirichlet}
Let $2\le p<\infty$ and let the weight $w$ satisfies the condition (\ref{condition1}).
Then the polyanalytic polynomials are dense in the weighted polyanalytic Besov space $\mathcal{B}^p_{q,w}$.
\end{theorem}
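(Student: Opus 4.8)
The plan is to reduce this to the Dirichlet case already treated in the previous theorem by absorbing the factor $(1-|z|^2)^{p-2}$ into the weight. Set $\widetilde w(z)=(1-|z|^2)^{p-2}w(z)$. Because $p\ge 2$, the exponent $p-2$ is non-negative, so $(1-|z|^2)^{p-2}$ is bounded by $1$ on $\disk$ and strictly positive in the interior; hence $\widetilde w\le w$ is again a positive integrable weight. Moreover, the Besov seminorm attached to $w$ is literally the Dirichlet seminorm attached to $\widetilde w$, since
$$\int_\disk(1-|z|^2)^{p-2}\big[|\partial_zf(z)|^p+|\partial_{\ol z}f(z)|^p\big]w(z)\,dA(z)=\int_\disk\big[|\partial_zf(z)|^p+|\partial_{\ol z}f(z)|^p\big]\widetilde w(z)\,dA(z),$$
and the additive term $|f(0)|^p$ is common to both norms. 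Thus $\mathcal{B}^p_{q,w}$ and $\mathcal{D}^p_{q,\widetilde w}$ coincide as normed spaces, and it suffices to check that $\widetilde w$ satisfies condition \eqref{condition1} and then to invoke the previous theorem.

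First I would verify \eqref{condition1} for $\widetilde w$. Fix $|z|<r$ with $r_0\le r<1$. Since $r<1$ we have $|z/r|\ge|z|$ while $|z/r|<1$, so $0<1-|z/r|^2\le 1-|z|^2$; raising to the non-negative power $p-2$ preserves the inequality, giving $(1-|z/r|^2)^{p-2}\le(1-|z|^2)^{p-2}$. Combining this with the hypothesis $r^kw(z/r)\le Cw(z)$ on $w$, I obtain
$$r^k\widetilde w\Big(\frac{z}{r}\Big)=r^k(1-|z/r|^2)^{p-2}w\Big(\frac{z}{r}\Big)\le(1-|z|^2)^{p-2}\,r^kw\Big(\frac{z}{r}\Big)\le C(1-|z|^2)^{p-2}w(z)=C\widetilde w(z).$$
Hence $\widetilde w$ satisfies \eqref{condition1} with the very same constant $C$, integer $k$, and threshold $r_0$.

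With this verified, the previous theorem applied to the weight $\widetilde w$ shows at once that the polyanalytic polynomials are dense in $\mathcal{D}^p_{q,\widetilde w}=\mathcal{B}^p_{q,w}$. Alternatively, one may repeat the dilatation argument verbatim: using $\partial_zf_r(z)=r\partial_zf(rz)$, the change of variable $z\mapsto z/r$, and the bound above for $\widetilde w$ to dominate the integrand, the dominated convergence theorem yields $\limsup_{r\to1^-}\int_\disk|\partial_zf_r|^p\widetilde w\,dA\le\int_\disk|\partial_zf|^p\widetilde w\,dA$ (and the same for $\partial_{\ol z}$), whence $f_r\to f$ in the norm of $\mathcal{B}^p_{q,w}$.

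The only genuine obstacle is the verification of \eqref{condition1} for $\widetilde w$, and this is exactly where the hypothesis $p\ge 2$ is indispensable: the monotonicity step $(1-|z/r|^2)^{p-2}\le(1-|z|^2)^{p-2}$ needs $p-2\ge 0$. For $p<2$ the inequality reverses and the factor $(1-|z/r|^2)^{p-2}$ blows up near the boundary as $r\to1^-$, so this direct reduction fails — which accounts for the restriction $2\le p<\infty$ in the statement.
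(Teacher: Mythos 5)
Your proposal is correct, and it takes a genuinely different (and more economical) route than the paper. The paper re-runs the dilatation argument directly in the Besov setting: after the change of variable it is faced with the factor $\bigl(\frac{r^2-|z|^2}{r^2}\bigr)^{p-2}$, observes that $r\mapsto\bigl(\frac{r^2-|z|^2}{r^2}\bigr)^{p-2}$ is increasing in $r$ when $p\ge 2$, and then combines monotone and dominated convergence to obtain the limsup inequality \eqref{bes-1}. You instead absorb $(1-|z|^2)^{p-2}$ into the weight, check that $\widetilde w(z)=(1-|z|^2)^{p-2}w(z)$ is again a positive integrable weight satisfying \eqref{condition1} with the same $C$, $k$, $r_0$, and identify $\mathcal{B}^p_{q,w}$ isometrically with $\mathcal{D}^p_{q,\widetilde w}$, so the Besov theorem becomes a literal corollary of Theorem 2.1. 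The key analytic fact is the same in both arguments --- your inequality $(1-|z/r|^2)^{p-2}\le(1-|z|^2)^{p-2}$ for $|z|<r$ is exactly the monotonicity in $r$ that the paper uses, since $\bigl(\frac{r^2-|z|^2}{r^2}\bigr)^{p-2}=(1-|z/r|^2)^{p-2}$ evaluated along the substituted variable --- but your packaging is structurally cleaner: it avoids repeating the limit argument, makes transparent that the hypothesis $p\ge 2$ enters solely through the preservation of condition \eqref{condition1} under multiplication by $(1-|z|^2)^{p-2}$, and correctly diagnoses why the reduction fails for $p<2$. One caveat you inherit rather than resolve: like the paper, your argument (via Theorem 2.1) passes from the limsup inequality $\limsup_{r\to1^-}\|\partial_z f_r\|_{L^p(\widetilde w)}\le\|\partial_z f\|_{L^p(\widetilde w)}$ to norm convergence $f_r\to f$, which tacitly uses the standard fact that pointwise convergence plus convergence of $L^p$ norms implies $L^p$ convergence, and it likewise relies on the unproved (but standard, via the representation \eqref{rep1}) step that each dilatation $f_r$ is approximable by polyanalytic polynomials; since you invoke the previous theorem wholesale, no new gap is introduced.
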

\begin{proof}
As in the preceding theorem, it suffices to verify that for each
$f\in \mathcal{B}^p_{q,w}$ we have
\begin{equation}\label{bes-1}
\limsup_{r\to 1^{-}}\int_\mathbb{D}(1-|z|^2)^{p-2}|\partial_zf_r(z)|^pw(z)dA(z)\le \int_\mathbb{D}(1-|z|^2)^{p-2}|\partial_zf(z)|^pw(z)dA(z),
\end{equation}
and
\begin{equation}\label{bes-2}
\limsup_{r\to 1^{-}}\int_\mathbb{D}(1-|z|^2)^{p-2}|\partial_{\ol z}f_r(z)|^pw(z)dA(z)\le \int_\mathbb{D}(1-|z|^2)^{p-2}|\partial_{\ol z}f(z)|^pw(z)dA(z).
\end{equation}
To this end, we work with
$$\|f\|^p_{\mathcal{B}^p_{q,w}}=\int_{\disk}(1-|z|^2)^{p-2}\Big [|\partial_zf(z)|^p +|\partial_{\ol z}f(z)|^p \Big ]w(z)dA(z).$$
Therefore, by a change of variables we have
\begin{align*}\|\partial_zf_r\|^p_{\mathcal{B}^p_{q,w}}&=\int_{\disk}(1-|z|^2)^{p-2}|\partial_zf_r(z)|^pw(z)dA(z)\\
&= r^{p-k-2}\int_{r\disk}\left (\frac{r^2-|z|^2}{r^2}\right )^{p-2}|\partial_zf(z)|^pr^kw\left (\frac{z}{r}\right )dA(z).
\end{align*}
But the function
$$r\mapsto \left (\frac{r^2-|z|^2}{r^2}\right )^{p-2}$$
is increasing in $r$ when $p\ge 2$. Now, we apply the monotone convergence theorem to the last integral,
and then invoke the dominated convergence theorem to obtain
$$\limsup_{r\to 1^{-}}\int_\mathbb{D}(1-|z|^2)^{p-2}|\partial_zf_r(z)|^pw(z)dA(z)\le \int_\mathbb{D}(1-|z|^2)^{p-2}|\partial_zf(z)|^pw(z)dA(z),$$
which proves \eqref{bes-1}. The proof of \eqref{bes-2} is similar.
\end{proof}

\section{Polyanalytic Besov spaces in the upper half-plane}
To get an idea of how to define polyanalytic Besov spaces in the upper half-plane, we start by recalling the definition of analytic Bergman spaces on the upper-half plane. An analytic function $f$ on $\up =\{z=x+iy\in \mathbb{C}:\impart (z)=y>0\}$
belongs to the Bergman space if
$$\int_{\up}|f(z)|^p \impart(z)^\alpha e^{-\beta |z|^2}dA(z)<\infty,\quad \alpha\ge 0,\, \beta\ge 0.$$
In the literature, one usually encounters this definition when $\beta=0$ (see \cite{dur}). Therefore, it is natural to define
the weighted poly-Bergman space of the upper half-plane, denoted by $\mathcal{A}^p_{q,w}(\up)$, as the space of $q$-analytic functions on $\up$
for which
$$\|f\|^p_{\mathcal{A}^p_{q,w}(\up)}=\int_{\up}|f(z)|^p w(z)\impart(z)^\alpha e^{-\beta |z|^2}dA(z)<\infty.$$
This suggests that the weighted Dirichlet space and the weighted Besov space of analytic functions should be normed, respectively, by
$$\|f\|^p_{D^p_{\up,w}}=|f(i)|^p+\int_{\up}|f^\prime(z)|^p w(z) \impart(z)^\alpha e^{-\beta |z|^2}dA(z),$$
and
\begin{equation*}
\|f\|_{B^p_{\up,w}}^p=|f(i)|^p+\int_{\up}|f^\prime(z)|^p w(z)\impart(z)^{\alpha+p-2} e^{-\beta |z|^2}dA(z).
\end{equation*}
The advantage of this definition is that, as in the unit disk case, when $p=2$ the Besov space reduces to the Dirichlet space.
In analogy with the above, we now declare the weighted Besov space of polyanalytic functions as the space of $q$-analytic functions on $\up$ such that
\begin{equation*}
\|f\|_{\mathcal{B}^p_{\up,w}}^p=|f(i)|^p+\int_{\up}\Big [|\partial_z f(z)|^p+|\partial_{\ol z} f(z)|^p \Big ] w(z)\impart(z)^{\alpha+p-2} e^{-\beta |z|^2}dA(z)
\end{equation*}
is finite. By our definition, when $p=2$, we get to the Dirichlet space of polyanalytic functions on the upper half-plane, i.e., polyanalytic functions on the upper half-plane for which
\begin{equation*}
\|f\|_{\mathcal{D}^p_{\up,w}}^p=|f(i)|^p+\int_{\up}\Big [|\partial_z f(z)|^p+|\partial_{\ol z} f(z)|^p \Big ] w(z)\impart(z)^{\alpha} e^{-\beta |z|^2}dA(z)<\infty.
\end{equation*}
\begin{theorem}\label{limsup-dirichlet}
Let $0<p<\infty$ and let $w$ satisfies the condition (\ref{condition1}).
Then for each $f\in\mathcal{D}^p_{q,w}(\up)$, the dilatations $f_r$ converge to $f$ in norm.
\end{theorem}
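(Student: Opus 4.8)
The plan is to mirror the unit-disk argument given above, the only genuinely new features being the Gaussian factor $e^{-\beta|z|^2}$ and the loss of compactness (the dilation substitution now sweeps out the whole of $\up$). The constant term is harmless: since $f\in C^q(\up)$ and $f_r(i)=f(ri)\to f(i)$, we have $|f_r(i)-f(i)|^p\to0$, so it suffices to control the two integral pieces. Writing $d\mu(z)=w(z)\,\impart(z)^{\alpha}e^{-\beta|z|^2}\,dA(z)$, I would reduce the theorem to the two limsup estimates
\[
\limsup_{r\to1^-}\int_{\up}|\partial_z f_r|^p\,d\mu\ \le\ \int_{\up}|\partial_z f|^p\,d\mu,
\]
and the companion estimate with $\partial_{\ol z}$ in place of $\partial_z$. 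Indeed $\partial_z f_r(z)=r\,\partial_z f(rz)$ and $\partial_z f$ is continuous (because $f\in C^q$), so $\partial_z f_r\to\partial_z f$ pointwise on $\up$, and likewise for $\partial_{\ol z}$; pointwise convergence together with a $\limsup$ bound on the $L^p(\mu)$-norms forces the differences to converge to $0$ in $L^p(\mu)$ --- this is exactly the Brezis--Lieb principle, valid for every $0<p<\infty$, and is the same mechanism already used in the disk case.

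To prove the limsup estimate I would carry out the substitution $u=rz$, which maps $\up$ bijectively onto itself. Using $\impart(z)=\impart(u)/r$, $|z|^2=|u|^2/r^2$ and $dA(z)=r^{-2}\,dA(u)$ one finds
\[
\int_{\up}|\partial_z f_r(z)|^p\,d\mu(z)=r^{\,p-\alpha-2}\int_{\up}|\partial_z f(u)|^p\,w\!\left(\tfrac{u}{r}\right)\impart(u)^{\alpha}e^{-\beta|u|^2/r^2}\,dA(u).
\]
Two elementary bounds tame the new integrand. First, since $0<r<1$ and $\beta\ge0$, we have $e^{-\beta|u|^2/r^2}\le e^{-\beta|u|^2}$, so the Gaussian factor is dominated by its value at $r=1$. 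Second, writing $r^{\,p-\alpha-2}=r^{\,p-\alpha-2-k}\,r^{k}$ and invoking condition (\ref{condition1}) in the form $r^{k}w(u/r)\le C\,w(u)$, the integrand is bounded, for $r$ close enough to $1$ that $r^{\,p-\alpha-2-k}\le 2$, by the fixed majorant $2C\,|\partial_z f(u)|^p\,w(u)\,\impart(u)^{\alpha}e^{-\beta|u|^2}$, which lies in $L^1$ precisely because $f\in\mathcal{D}^p_{q,w}(\up)$.

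With this dominating function the dominated convergence theorem delivers the $r\to1^-$ limit of the right-hand integral: the prefactor $r^{\,p-\alpha-2}\to1$, the Gaussian tends to $e^{-\beta|u|^2}$, and $w(u/r)\to w(u)$ (pointwise if $w$ is continuous, or a.e.\ along subsequences by $L^1_{\mathrm{loc}}$-continuity of dilation). Hence the right-hand side converges to $\int_{\up}|\partial_z f|^p\,d\mu$, which is the required estimate; the argument for $\partial_{\ol z} f_r=r\,\partial_{\ol z} f(rz)$ is word for word the same. Combined with the convergence of the constant term and the Brezis--Lieb step above, this gives $\|f_r-f\|_{\mathcal{D}^p_{q,w}(\up)}\to0$.

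The step I expect to be the main obstacle is the domain on which condition (\ref{condition1}) may legitimately be applied. In the disk the substitution lands inside $r\disk$, which matches exactly the constraint $|z|<r$ built into (\ref{condition1}); over the unbounded half-plane, however, $u=rz$ ranges over all of $\up$ and one cannot restrict to $|u|<r$. The natural remedy is to interpret (\ref{condition1}) on $\Omega=\up$ globally, i.e.\ $r^{k}w(u/r)\le C\,w(u)$ for every $u\in\up$; with that reading the estimate above goes through verbatim. If one insists on the literal local form, the integral must instead be split at $|u|=r$: the part $|u|<r$ is handled by (\ref{condition1}) directly, while on the tail $|u|\ge r$ one must show that the Gaussian gain $e^{-\beta|u|^2(r^{-2}-1)}$ (present only when $\beta>0$) outpaces the growth of $w(u/r)/w(u)$ --- which is where any genuine difficulty resides, and which in general forces an a priori growth restriction on $w$.
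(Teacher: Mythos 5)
Your proposal follows essentially the same route as the paper's own proof: the same change of variables $u=rz$, the same combination of the Gaussian bound $e^{-\beta|u|^2/r^2}\le e^{-\beta|u|^2}$ with condition (\ref{condition1}) to produce a fixed integrable majorant, the dominated convergence theorem to obtain the limsup inequality, and the (left implicit in the paper) principle that pointwise convergence plus a limsup bound on norms yields $L^p$ convergence. Your closing remark is apt but not a divergence: the paper simply applies (\ref{condition1}) on all of $\up$ without comment, i.e.\ it tacitly adopts exactly the global reading you propose.
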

\begin{proof}
As for approximation, it is enough to work with the following semi-norm:
$$\|f\|^p_{\mathcal{D}^p_{q,w}(\up)}=\int_{\up}\Big [|\partial_z f(z)|^p+|\partial_{\ol z} f(z)|^p \Big ] w(z)\impart(z)^\alpha e^{-\beta |z|^2}dA(z).$$
We also note that
\begin{align}\label{dir-up-1}
\|f_r -f\|^p_{\mathcal{D}^p_{q,w}(\up)}&=\int_{\up}|\partial_zf_r(z)-\partial_zf(z)|^p w(z)\impart(z)^\alpha e^{-\beta |z|^2}dA(z)\nonumber \\
&+\int_{\up}|\partial_{\ol z}f_r(z)-\partial_{\ol z}f(z)|^p w(z)\impart(z)^\alpha e^{-\beta |z|^2}dA(z).
\end{align}
Making a change of variable, and using the fact that
$$\partial_zf_r(z)=r\partial_zf(rz),$$
we obtain
\begin{align*}\int_{\up} |\partial_zf_r(z)|^pw(z)&\impart(z)^\alpha e^{-\beta |z|^2}dA(z)\\ &=
r^{p-(k+\alpha+2)}\int_{\up} |\partial_zf(z)|^p r^kw\left(\frac{z}{r} \right)\impart(z)^\alpha e^{\frac{-\beta |z|^2}{r^2}}dA(z)\\
&\le C r^{p-(k+\alpha+2)}\int_{\up} |\partial_zf(z)|^p w(z)\impart(z)^\alpha e^{\frac{-\beta |z|^2}{r^2}}dA(z)
\end{align*}
Therefore the dominated convergence theorem applies to ensure
$$\limsup_{r\to 1^-}\int_{\up} |\partial_zf_r(z)|^pw(z)\impart(z)^\alpha e^{-\beta |z|^2}dA(z)\le \int_{\up} |\partial_zf(z)|^pw(z)\impart(z)^\alpha e^{-\beta |z|^2}dA(z).$$
This entails that the first term on the right-hand side of \eqref{dir-up-1} tends to zero:
$$\lim_{r\to 1^-}\int_{\up}| \partial_zf_r(z)-\partial_zf(z)|^p w(z)\impart(z)^\alpha e^{-\beta |z|^2}dA(z)=0.$$
Similarly, one shows that the second term on the right-hand side of \eqref{dir-up-1} tends to zero, from which we obtain
$$\lim_{r\to 1^-}\|f_r -f\|^p_{\mathcal{D}^p_{q,w}(\up)}=0.$$
\end{proof}

For the Bergman space, the problem can be settled more easily.
\begin{corollary}\label{berg-thm}
Let $0<p<\infty$ and let $w$ satisfies the condition (\ref{condition1}).
Then for each function $f\in\mathcal{A}^p_{q,w}(\up)$, the dilatations $f_r$ converge to $f$ in norm.
\end{corollary}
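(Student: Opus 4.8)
The plan is to reproduce the argument of the preceding theorem almost verbatim, the only — and simplifying — difference being that the poly-Bergman norm measures $f$ itself rather than its partial derivatives and carries no point-evaluation term. It is convenient to regard everything as happening in $L^p$ of the fixed measure $d\mu(z)=w(z)\impart(z)^\alpha e^{-\beta |z|^2}dA(z)$, so that the whole matter reduces to the single $\limsup$ inequality
\begin{equation*}
\limsup_{r\to 1^-}\int_{\up}|f_r(z)|^p w(z)\impart(z)^\alpha e^{-\beta |z|^2}dA(z)\le \int_{\up}|f(z)|^p w(z)\impart(z)^\alpha e^{-\beta |z|^2}dA(z).
\end{equation*}
Once this is in hand, the pointwise convergence $f_r(z)=f(rz)\to f(z)$, valid because $f\in C^q(\up)$ is continuous, upgrades it to norm convergence: Fatou's lemma applied to $|f_r|^p\to|f|^p$ supplies the matching $\liminf$ bound, so the norms actually converge, and in $L^p(\mu)$ $(0<p<\infty)$ almost-everywhere convergence together with convergence of the norms forces $\|f_r-f\|_{\mathcal{A}^p_{q,w}(\up)}\to 0$ by the Riesz convergence theorem.

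To prove the $\limsup$ inequality I would substitute $u=rz$ in the integral defining $\|f_r\|^p_{\mathcal{A}^p_{q,w}(\up)}$. Dilation by $r>0$ maps $\up$ onto itself, and under it $\impart(z)=\impart(u)/r$, $|z|^2=|u|^2/r^2$, and $dA(z)=r^{-2}dA(u)$; since $f_r(z)=f(rz)$ there is no extra factor arising from a derivative. Hence
\begin{align*}
\int_{\up}|f_r(z)|^p w(z)\impart(z)^\alpha e^{-\beta |z|^2}dA(z)
&=r^{-(k+\alpha+2)}\int_{\up}|f(u)|^p\,r^k w\!\left(\frac{u}{r}\right)\impart(u)^\alpha e^{-\beta |u|^2/r^2}dA(u).
\end{align*}
This is exactly the expression that arose in the previous proof, but with $|f|^p$ in place of $|\partial_z f|^p$ and without the derivative's factor $r^p$; that absence is the sense in which the Bergman case is the easier one.

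Next I would set up dominated convergence. Invoking condition \eqref{condition1} in the form $r^kw(u/r)\le Cw(u)$ for $r_0\le r<1$, and noting that $e^{-\beta|u|^2/r^2}\le e^{-\beta|u|^2}$ whenever $0<r<1$ and $\beta\ge 0$, the integrand is dominated by the fixed function $C\,|f(u)|^p w(u)\impart(u)^\alpha e^{-\beta|u|^2}$, which lies in $L^1(\up)$ precisely because $f\in\mathcal{A}^p_{q,w}(\up)$. Since the prefactor $r^{-(k+\alpha+2)}\to 1$ and the integrand converges pointwise to $|f(u)|^p w(u)\impart(u)^\alpha e^{-\beta|u|^2}$ as $r\to 1^-$, the dominated convergence theorem delivers the exact limit $\|f\|^p_{\mathcal{A}^p_{q,w}(\up)}$, which in particular gives the required $\limsup$ bound and closes the argument.

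The step demanding the most care is the pointwise convergence of the dilated weight, $w(u/r)\to w(u)$ as $r\to 1^-$. This is immediate when $w$ is continuous, which is the natural standing assumption for weights of this type; for a merely integrable $w$ the cruder estimate only yields $\limsup\|f_r\|^p\le C\|f\|^p$, which is insufficient for the Riesz step, so some radial regularity of $w$ is genuinely needed here. A second, milder point — already implicit in the preceding theorem — is that after the substitution $u$ ranges over all of $\up$, whereas \eqref{condition1} is imposed only for $|z|<r$; the contribution from $|u|\ge r$ is held in check by the rapidly decaying Gaussian factor $e^{-\beta|u|^2/r^2}$, so for a self-contained account one should either demand \eqref{condition1} on all of $\up$ (the relevant case being $\beta=0$) or exploit the Gaussian decay explicitly when $\beta>0$.
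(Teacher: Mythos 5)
Your proof is correct and follows the paper's own argument essentially verbatim: the same change of variable $u=rz$, the same domination via condition \eqref{condition1} together with $e^{-\beta|u|^2/r^2}\le e^{-\beta|u|^2}$, and the dominated convergence theorem giving $\|f_r\|^p_{\mathcal{A}^p_{q,w}(\up)}\to\|f\|^p_{\mathcal{A}^p_{q,w}(\up)}$, after which you make explicit the a.e.-convergence-plus-norm-convergence (Riesz) step that the paper compresses into its final line ``and finally $f_r\to f$.'' Your two closing caveats --- that the pointwise convergence $w(u/r)\to w(u)$ needs some regularity of $w$, and that after the substitution \eqref{condition1} is invoked on all of $\up$ rather than only on $|z|<r$ --- are accurate observations about details the paper itself glosses over, not deviations from its method.
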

\begin{proof}
Recalling the definition of norm in $\mathcal{A}^p_{q,w}(\up)$, we just note that
\begin{align*}
\|f_r\|^p_{\mathcal{A}^p_{q,w}{(\up)}}&=\int_{\mathbb{C}^+}|f(rz)|^p w(z)\impart(z)^\alpha e^{-\beta |z|^2}dA(z)\\
&=\frac{1}{r^{\alpha+k+2}}\int_{\mathbb{C}^+}|f(z)|^p r^kw\left (\frac{z}{r}\right )\impart(z)^\alpha e^{\frac{-\beta |z|^2}{r^2}}dA(z)\\
&=\frac{1}{r^{\alpha+k+2}}\int_{\mathbb{C}^+}|f(z)|^p r^kw\left (\frac{z}{r}\right )\impart(z)^\alpha e^{-\beta |z|^2}e^{\beta |z|^2(1-r^{-2})}dA(z).
\end{align*}
Recall that for $r_0<r<1$ we have
$$e^{\beta |z|^2 (1-r^{-2})}\le 1,$$
and hence
\begin{align*}
\|f_r\|^p_{\mathcal{A}^p_{q,w}{(\up)}}&=
\frac{1}{r^{\alpha+k+2}}\int_{\mathbb{C}^+}|f(z)|^p r^kw\left (\frac{z}{r}\right )\impart(z)^\alpha e^{-\beta |z|^2}e^{\beta |z|^2(1-r^{-2})}dA(z)\\
&\le \frac{C}{r^{\alpha+k+2}}\int_{\mathbb{C}^+}|f(z)|^p w(z)\impart(z)^\alpha e^{-\beta |z|^2}dA(z)<\infty.
\end{align*}
This means that the dominated convergence theorem can be applied; so that
$$\lim_{r\to 1^-}\|f_r\|^p_{\mathcal{A}^p_{q,w}{(\up)}}= \int_{\mathbb{C}^+}|f(z)|^p w(z)\impart(z)^\alpha e^{-\beta |z|^2}dA(z)=\|f\|^p_{\mathcal{A}^p_{q,w}{(\up)}},$$
and finally $f_r\to f$ in $\mathcal{A}^p_{q,w}{(\up)}$.
\end{proof}

We now state a similar approximation theorem for the polyanalytic Besov spaces on $\up$.
\begin{theorem}\label{limsup-dirichlet}
Let $2\le p<\infty$ and let $w$ satisfies the condition (\ref{condition1}).
Then for each $f\in\mathcal{B}^p_{q,w}(\up)$, the dilatations $f_r$ are convergent to $f$ in the norm.
\end{theorem}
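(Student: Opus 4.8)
The plan is to follow the polyanalytic Dirichlet argument on $\up$ almost verbatim, with the exponent $\alpha$ replaced throughout by $\alpha+p-2$, and to control the Gaussian factor exactly as in Corollary~\ref{berg-thm}. As in the disk Besov case, the constant term is irrelevant for approximation (indeed $f_r(i)=f(ri)\to f(i)$), so it suffices to work with the seminorm and, in analogy with \eqref{bes-1} and \eqref{bes-2}, to establish the two limsup inequalities
$$\limsup_{r\to1^-}\int_{\up}|\partial_z f_r(z)|^p w(z)\impart(z)^{\alpha+p-2}e^{-\beta|z|^2}\,dA(z)\le\int_{\up}|\partial_z f(z)|^p w(z)\impart(z)^{\alpha+p-2}e^{-\beta|z|^2}\,dA(z)$$
together with its twin in which $\partial_z$ is replaced by $\partial_{\ol z}$.

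First I would insert $\partial_z f_r(z)=r\,\partial_z f(rz)$ and change variables $z\mapsto z/r$, recording that $\impart(z/r)=\impart(z)/r$, $|z/r|^2=|z|^2/r^2$, and that the Jacobian contributes $r^{-2}$. Collecting the powers of $r$ (they combine into the single prefactor $r^{-\alpha-k}$, which tends to $1$; the hypothesis $p\ge2$ keeps $\alpha+p-2\ge\alpha\ge0$, so that $\impart(z)^{\alpha+p-2}$ stays bounded near the real axis, just as $p\ge2$ was needed in the disk case), this gives
$$\int_{\up}|\partial_z f_r(z)|^p w(z)\impart(z)^{\alpha+p-2}e^{-\beta|z|^2}\,dA(z)=r^{-\alpha-k}\int_{\up}|\partial_z f(z)|^p\,r^k w(z/r)\,\impart(z)^{\alpha+p-2}e^{-\beta|z|^2/r^2}\,dA(z),$$
which is precisely the formula from the $\up$ Dirichlet proof with $\alpha$ shifted to $\alpha+p-2$.

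The decisive step is then to dominate this integrand uniformly for $r$ near $1$: condition~(\ref{condition1}) supplies $r^k w(z/r)\le Cw(z)$, and for $\beta\ge0$, $r_0\le r<1$ the elementary bound $e^{-\beta|z|^2/r^2}\le e^{-\beta|z|^2}$ — exactly the estimate used in Corollary~\ref{berg-thm} — controls the Gaussian, while $r^{-\alpha-k}\le r_0^{-\alpha-k}$ is harmless. Hence the integrand is bounded by a fixed multiple of the (integrable) target integrand, and since $r^{-\alpha-k}\to1$ and $e^{-\beta|z|^2/r^2}\to e^{-\beta|z|^2}$ pointwise, dominated convergence delivers the limsup inequality; the $\partial_{\ol z}$ term is identical. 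Finally, because $\partial_z f_r\to\partial_z f$ and $\partial_{\ol z}f_r\to\partial_{\ol z}f$ pointwise, Fatou's lemma supplies the matching liminf bounds, so the relevant weighted $L^p$ norms actually converge; the Riesz–Scheffé principle (a.e.\ convergence together with convergence of the norms forces norm convergence of the differences) then yields $\|f_r-f\|_{\mathcal{B}^p_{q,w}(\up)}\to0$. I expect the only real obstacle to be this domination step, where the non-radial weight and the Gaussian must be tamed simultaneously under the scaling $|z|^2\mapsto|z|^2/r^2$; everything else is bookkeeping inherited from the two preceding proofs.
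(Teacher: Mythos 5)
Your proposal matches the paper's proof essentially step for step: the same change of variables $z\mapsto z/r$ producing the prefactor $r^{-\alpha-k}$ (the paper's $r^{p}/r^{k+\alpha+p}$), the same use of condition (\ref{condition1}) together with the bound $e^{-\beta|z|^2/r^2}\le e^{-\beta|z|^2}$ to dominate the integrand uniformly for $r_0\le r<1$, and dominated convergence to obtain the two limsup inequalities for $\partial_z$ and $\partial_{\ol z}$. The only difference is that you make explicit the final passage from the limsup inequalities to $\|f_r-f\|_{\mathcal{B}^p_{q,w}(\up)}\to 0$ (Fatou for the matching liminf plus the Riesz--Scheff\'{e} argument), a step the paper asserts without detail, so this is a clarification rather than a departure.
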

\begin{proof} Let
$f\in \mathcal{B}^p_{q,w}(\up)$, and let $d\mu(z)=w(z)\impart(z)^{\alpha+p-2} e^{-\beta |z|^2}dA(z)$. We recall that
$$\|f\|^p_{\mathcal{B}^p_{q,w}(\up)}=\int_{\up}\Big [|\partial_zf(z)|^p+|\partial_{\ol z}f(z)|^p\Big ] d\mu(z).$$
Therefore, by a change of variable we have
\begin{align*}\int_{\up}|\partial_zf_r(z)|^p d\mu(z)&=
\frac{r^p}{r^{k+\alpha+p}}\int_{\up}|\partial_zf(z)|^pr^kw\left (\frac{z}{r}\right )\impart(z)^{\alpha+p-2} e^{\frac{-\beta |z|^2}{r^2}}dA(z)\\
&\le \frac{C}{r^{k+\alpha}}\int_{\up}|\partial_zf(z)|^pd\mu (z).
\end{align*}
Now, we apply the dominated convergence theorem to get
$$\limsup_{r\to 1^{-}}\int_{\up}|\partial_zf_r(z)|^pd\mu(z)\le \int_{\up}|\partial_zf(z)|^pd\mu(z).$$
The last inequality shows that
$$\lim_{r\to 1^-}\int_{\up}| \partial_zf_r(z)-\partial_zf(z)|^p w(z)\impart(z)^{\alpha+p-2} e^{-\beta |z|^2}dA(z)=0.$$
The proof that
$$\lim_{r\to 1^-}\int_{\up}| \partial_{\ol z}f_r(z)-\partial_{\ol z}f(z)|^p w(z)\impart(z)^{\alpha+p-2} e^{-\beta |z|^2}dA(z)=0$$
is similar. These equalities show that
$$\lim_{r\to 1^-}\|f_r -f\|^p_{\mathcal{B}^p_{q,w}(\up)}=0.$$
\end{proof}

\begin{example} (a).
Let $\beta$ be a positive number, and $n$ be a positive integer. Then $w(z)=e^{-\beta |z|^n}$ satisfies the condition (\ref{condition1}).
Indeed, for each $0<r<1$, we have
$$w\left (\frac{z}{r}\right )=e^{\frac{-\beta |z|^n}{r^n}}\le e^{-\beta |z|^n}=w(z).$$
(b). Consider the non-radial weight $$w(z)=e^{-\beta |\real(z)|^n}= e^{-\beta |x|^n}.$$
Again, we have $w(z/r)\le w(z)$.\\
(c). In some instances the function $r\mapsto w(z/r)$ may not satisfy the condition (\ref{main-condition}), however there might exist some positive integer $k$ for which the function $r\mapsto r^kw(z/r)$ satisfies the required condition. For instance if $w(z)=\exp(|z|)$, then
$$\frac{d}{dr}w\left (\frac{z}{r}\right )=-\frac{|z|}{r^2}\exp\left (\frac{|z|}{r}\right )<0$$
while
$$\frac{d}{dr}\left[rw(\frac{z}{r})\right]=\left (1-\frac{|z|}{r}\right )\exp\left (\frac{|z|}{r}\right )>0,\quad |z|<r.$$
Note also that $r^2e^{|z|/r}$ is increasing for $r>1/2$ since
$$\frac{d}{dr}(r^2e^{|z|/r})=2re^{|z|/r}+r^2(\frac{-1}{r^2}e^{|z|/r})=(2r-1)e^{|z|/r}>0.$$
\end{example}

\section{Angular weights}

In contrast to radial weights, let us assume that the weight function depends only on the argument of $z$; that is,
$w(re^{i\theta})=w(\theta)$. We may call such weights {\it angular weights}. For example,
$$w(z)=w(re^{i\theta})=(4\pi^2-\theta^2)^\alpha,\quad 0\le \theta <2\pi,\,\,\alpha>0,$$
is an angular weight in the unit disk.
It seems that the study of angular weights was overlooked in the literature. Here we provide some statements on the approximation by polyanalytic polynomials in such weighted spaces. Let us fix the following notations.
\begin{theorem}\label{angular-main}
Let $w:\disk \to(0,\infty)$ be an angular weight satisfying
$$\int_0^{2\pi}w(\theta)d\theta <\infty.$$
Then the $q$-analytic polynomials are dense in $\mathcal{B}^p_{q,w},\, 2\le p<\infty$.
\end{theorem}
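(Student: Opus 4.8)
The plan is to follow the same two-step scheme used in the earlier theorems of this paper: first establish that the dilatations $f_r$ converge to $f$ in the norm of $\mathcal{B}^p_{q,w}$, and then show that each individual $f_r$ is approximable by polyanalytic polynomials. The decisive observation is that an angular weight is automatically invariant under radial dilation. Indeed, if $z=\rho e^{i\theta}$ then $z/r=(\rho/r)e^{i\theta}$ carries the same argument, so $w(z/r)=w(\theta)=w(z)$ for every $0<r<1$. Consequently $w$ satisfies condition \eqref{condition1} with $k=0$ and $C=1$ (for any $r_0\in(0,1)$), and the hypothesis $\int_0^{2\pi}w(\theta)\,d\theta<\infty$ guarantees that $w$ is genuinely integrable over $\disk$, since $\int_\disk w\,dA=\tfrac12\int_0^{2\pi}w(\theta)\,d\theta$.

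Granting this, the convergence $f_r\to f$ in $\mathcal{B}^p_{q,w}$ for $2\le p<\infty$ is exactly the conclusion of the earlier polyanalytic Besov theorem, whose only hypothesis \eqref{condition1} we have just verified; so I would simply invoke it. It then remains to approximate a fixed dilatation $f_r$ by polynomials. Writing $f=\sum_{k=0}^{q-1}\olz^k h_k(z)$ with analytic components $h_k$, one has $f_r(z)=\sum_{k=0}^{q-1} r^k\olz^k h_k(rz)$, and each $g_k(z):=h_k(rz)$ is analytic on the larger disk $\tfrac1r\disk\supset\overline{\disk}$. I would replace each $g_k$ by a Taylor polynomial $P_{k,N}$, which converges to $g_k$ uniformly on $\overline{\disk}$ together with all its derivatives, and set $Q_N(z)=\sum_{k=0}^{q-1} r^k\olz^k P_{k,N}(z)$.

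The remaining task is to pass from uniform convergence to norm convergence. Since $\partial_z(\olz^k P_{k,N})=\olz^k P_{k,N}'$ and $\partial_{\olz}(\olz^k P_{k,N})=k\,\olz^{k-1}P_{k,N}$, both $\partial_z Q_N\to\partial_z f_r$ and $\partial_{\olz}Q_N\to\partial_{\olz}f_r$ uniformly on $\overline{\disk}$, while the point-evaluation term is controlled by $Q_N(0)\to f_r(0)=f(0)$. Because $2\le p<\infty$, the factor $(1-|z|^2)^{p-2}$ is bounded on $\disk$, and since $w$ is integrable the measure $(1-|z|^2)^{p-2}w(z)\,dA(z)$ is finite; hence uniform convergence of the derivatives forces $Q_N\to f_r$ in $\mathcal{B}^p_{q,w}$, and the same finiteness shows that every polyanalytic polynomial already lies in the space. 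Combining the two steps yields density. The only place where care is needed — and the reason the hypothesis $p\ge2$ is imposed — is this last passage: for $p<2$ the factor $(1-|z|^2)^{p-2}$ blows up at the boundary, so uniform control of the derivatives no longer suffices and mere integrability of $w$ would not guarantee that the polynomials belong to the space.
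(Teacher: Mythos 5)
Your proposal is correct and follows essentially the same route as the paper: the paper's proof likewise rests on the observation that an angular weight satisfies $w(z/r)=w(z)$ (i.e., condition \eqref{condition1} with $k=0$, $C=1$), repeats the change-of-variables/monotone-plus-dominated convergence computation of the earlier Besov theorem to get $f_r\to f$ in norm, and then approximates each $f_r$ by $q$-analytic polynomials via the representation \eqref{rep1}. The only difference is one of detail: the paper merely asserts this last step as "a consequence of \eqref{rep1}", whereas you carry it out explicitly (Taylor polynomials $P_{k,N}$, uniform convergence of $\partial_z$ and $\partial_{\olz}$ on $\overline{\disk}$, finiteness of the measure $(1-|z|^2)^{p-2}w\,dA$ for $p\ge 2$), which is a faithful filling-in rather than a different argument.
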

\begin{proof}
We first note that for $z=re^{i\theta}$ we have $w(z)=w(z/r)=w(\theta)$, so that by neglecting the constant term in the definition of norm we have
\begin{align*}
\| f_r\|^p_{\mathcal{B}^p_{q,w}}&=\int_{\disk}(1-|z|^2)^{p-2}\Big [|\partial_zf_r(z)|^p+|\partial_{\overline z} f_r(z)|^p\Big ]w(z) dA(z)\\
&=r^{p-2}\int_{r\disk}\left (\frac{r^2-|z|^2}{r^2}\right )^{p-2}\Big [|\partial_zf(z)|^p+|\partial_{\overline z} f(z)|^p\Big ] w(z)dA(z).
\end{align*}
This implies that
$$\limsup_{r\to 1^-} \| f_r\|^p_{\mathcal{B}^p_{q,w}}=\| f\|^p_{\mathcal{B}^p_{q,w}},$$
from which it follows that $f_r\to f$ in norm. But on the unit disk, each $f_r$ can be approximated by $q$-analytic polynomials (this is a consequence of \eqref{rep1}), from which the assertion follows.
\end{proof}

We now consider weights that are multiples of a radial weight and an angular weight.
\begin{theorem}\label{radial-angular-main}
Let $w(se^{i\theta})=\omega(s)v(\theta)$ be a weight function on $\disk$ where $\omega$ and $v$ satisfy
$$\int_0^{1}s\omega(s)ds <\infty,\,\, \int_0^{2\pi}v(\theta)d\theta <\infty,$$
and $r^k\omega(s/r)\le C\omega(s)$ for some integer $k\ge 0$.
Then the $q$-analytic polynomials are dense in $\mathcal{B}^p_{q,w},\, 2\le p<\infty$.
\end{theorem}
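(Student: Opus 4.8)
The plan is to deduce this theorem from the disk Besov density theorem established in Section~2 (the one valid for $2\le p<\infty$), whose only hypothesis on the weight is condition~(\ref{condition1}). Thus the whole task reduces to checking that a product weight $w(se^{i\theta})=\omega(s)v(\theta)$ automatically satisfies (\ref{condition1}), and that the integrability assumptions guarantee both that $w$ is a genuine (integrable) weight and that the $q$-analytic polynomials actually lie in $\mathcal{B}^p_{q,w}$, so that the density statement is meaningful.

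First I would verify (\ref{condition1}). The decisive observation is that the dilation $z\mapsto z/r$ scales the modulus but fixes the argument: if $z=se^{i\theta}$ then $z/r=(s/r)e^{i\theta}$, so $w(z/r)=\omega(s/r)v(\theta)$ while $w(z)=\omega(s)v(\theta)$. Multiplying the radial hypothesis $r^k\omega(s/r)\le C\omega(s)$ (read, in accordance with (\ref{condition1}), for $s<r$ and $r_0\le r<1$) by the nonnegative factor $v(\theta)$ gives
$$r^k w\!\left(\frac{z}{r}\right)=r^k\omega\!\left(\frac{s}{r}\right)v(\theta)\le C\,\omega(s)v(\theta)=C\,w(z),\qquad |z|<r,$$
which is exactly (\ref{condition1}). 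Hence Theorem~2.2 applies verbatim and yields $f_r\to f$ in the norm of $\mathcal{B}^p_{q,w}$ for every $f$ in the space.

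Next I would confirm the background hypotheses. Passing to polar coordinates and using Fubini, $\int_{\disk}w\,dA=\big(\int_0^1 s\omega(s)\,ds\big)\big(\int_0^{2\pi}v(\theta)\,d\theta\big)<\infty$, so $w$ is integrable. For a $q$-analytic polynomial $p(z,\olz)$ the derivatives $\partial_z p$ and $\partial_{\ol z}p$ are bounded on $\disk$, and since $p\ge2$ the factor $(1-|z|^2)^{p-2}$ is bounded by $1$; therefore $\|p\|^p_{\mathcal{B}^p_{q,w}}$ is dominated by a constant multiple of $\int_{\disk}w\,dA<\infty$, so the polynomials indeed belong to $\mathcal{B}^p_{q,w}$. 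Combining the norm convergence $f_r\to f$ with the fact that each dilate $f_r$ can be approximated by $q$-analytic polynomials (a consequence of the decomposition (\ref{rep1}), exactly as in the proof of Theorem~\ref{angular-main}) completes the argument.

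If one prefers a self-contained proof that does not invoke Theorem~2.2, one would instead repeat the change of variables $z\mapsto z/r$ directly, obtaining
$$\int_{\disk}(1-|z|^2)^{p-2}|\partial_zf_r(z)|^p w(z)\,dA(z)=r^{p-k-2}\int_{r\disk}\Big(\frac{r^2-|z|^2}{r^2}\Big)^{p-2}|\partial_zf(z)|^p\,r^k\omega\!\left(\frac{|z|}{r}\right)v(\theta)\,dA(z),$$
and then control the right-hand side by the integrable majorant $C(1-|z|^2)^{p-2}|\partial_zf(z)|^p w(z)\chi_{\disk}$, using $\big(\frac{r^2-|z|^2}{r^2}\big)^{p-2}\le(1-|z|^2)^{p-2}$ for $p\ge2$ together with the verified bound $r^k\omega(|z|/r)\le C\omega(|z|)$. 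The main obstacle in this route is the convergence bookkeeping: one must combine the monotone increase of $\big(\frac{r^2-|z|^2}{r^2}\big)^{p-2}$ in $r$ with the dominated convergence of $r^k\omega(|z|/r)v(\theta)\to w(z)$ to conclude $\limsup_{r\to1^-}\int_{\disk}(1-|z|^2)^{p-2}|\partial_zf_r|^p w\,dA\le\int_{\disk}(1-|z|^2)^{p-2}|\partial_zf|^p w\,dA$, and then upgrade this $\limsup$ estimate, via the pointwise convergence $\partial_zf_r\to\partial_zf$ and a Brezis--Lieb/Riesz type argument, to the genuine norm convergence $\int_{\disk}(1-|z|^2)^{p-2}|\partial_zf_r-\partial_zf|^p w\,dA\to0$ (and likewise for $\partial_{\ol z}$). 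Apart from this standard but delicate measure-theoretic step, the reduction to the product structure makes the result essentially a corollary of the disk case.
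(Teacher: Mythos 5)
Your argument is correct, and in substance it is the paper's own argument; the difference is one of packaging. The paper does not cite the Section~2 theorem here: it simply repeats the change-of-variables computation for the product weight, obtaining
$\| f_r\|^p_{\mathcal{B}^p_{q,w}} \le C\, r^{p-2-k}\int_{r\disk}\bigl(\tfrac{r^2-|z|^2}{r^2}\bigr)^{p-2}\bigl[|\partial_z f|^p+|\partial_{\ol z}f|^p\bigr]\omega(s)v(\theta)\,dA$,
then applies dominated convergence to conclude $\limsup_{r\to1^-}\|f_r\|^p_{\mathcal{B}^p_{q,w}}\le\|f\|^p_{\mathcal{B}^p_{q,w}}$ and deduces density --- which is exactly your ``self-contained'' fallback route. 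Your primary route is cleaner: you observe that dilation fixes the argument, so $r^kw(z/r)=r^k\omega(s/r)v(\theta)\le C\,\omega(s)v(\theta)=C\,w(z)$, meaning the product weight satisfies (\ref{condition1}) verbatim and the theorem is literally a corollary of the Section~2 density theorem for $\mathcal{B}^p_{q,w}$, with no duplicated analysis. Beyond that, you supply two details the paper leaves implicit: the Fubini computation showing $\int_\disk w\,dA<\infty$, hence (using $(1-|z|^2)^{p-2}\le 1$ for $p\ge2$) that the $q$-analytic polynomials actually belong to $\mathcal{B}^p_{q,w}$ --- this is presumably why the theorem carries the integrability hypotheses at all --- and the recognition that upgrading the $\limsup$ norm inequality together with pointwise convergence $\partial_z f_r\to\partial_z f$ to genuine norm convergence requires a Riesz/Brezis--Lieb type lemma, a step the paper asserts without comment (``from which the result follows''), both here and in Theorem~2.2. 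One small caveat, inherited equally by the paper's proof and by your fallback route: the dominated-convergence step tacitly uses the pointwise convergence $r^k\omega(s/r)\to\omega(s)$ as $r\to1^-$, which requires some mild regularity of $\omega$ (e.g.\ continuity almost everywhere); your primary reduction simply inherits whatever Theorem~2.2 guarantees, so it is no worse off on this point.
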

\begin{proof}
Again, we see that
\begin{align*}
\| f_r\|^p_{\mathcal{B}^p_{q,w}}&=\int_{\disk}(1-|z|^2)^{p-2}\Big [|\partial_zf_r(z)|^p+|\partial_{\overline z} f_r(z)|^p\Big ]w(s)v(\theta)dA(z)\\
&=r^{p-2-k}\int_{r\disk}\left (\frac{r^2-|z|^2}{r^2}\right )^{p-2}\Big [|\partial_zf(z)|^p+|\partial_{\overline z} f(z)|^p\Big ] r^k w(s/r)v(\theta)dA(z)\\
&\le C r^{p-2-k}\int_{r\disk}\left (\frac{r^2-|z|^2}{r^2}\right )^{p-2}\Big [|\partial_zf(z)|^p+|\partial_{\overline z} f(z)|^p\Big ]w(s)v(\theta)dA(z).
\end{align*}
Therefore, the dominated convergence theorem applies;
$$\limsup_{r\to 1^-}\| f_r\|^p_{\mathcal{B}^p_{q,w}}=\| f\|^p_{\mathcal{B}^p_{q,w}},$$
from which the result follows.
\end{proof}

The above two theorems have upper half-plane analogs as well.
\begin{theorem}\label{dirich-upper-ang}
Let $0<p<\infty$ and let $w$ be an angular weight function on $\up$ satisfying
$$\int_0^{\pi}w(\theta)d\theta <\infty.$$
Then for each $f\in\mathcal{D}^p_{q,w}(\up)$, the dilatations $f_r$ converge to $f$ in norm.
\end{theorem}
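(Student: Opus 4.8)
The plan is to reduce everything to the dilation-invariance of an angular weight. Since the dilation $z \mapsto z/r$ leaves the argument unchanged, for $z = se^{\I\theta} \in \up$ we have $w(z/r) = w(\theta) = w(z)$ for every $0 < r < 1$. Thus an angular weight satisfies the structural hypothesis \eqref{condition1} with $k = 0$ and $C = 1$ (take any $r_0 \in (0,1)$), so the statement is in fact a special case of the dilatation theorem for $\mathcal{D}^p_{q,w}(\up)$ established above. Nonetheless I would record a short direct argument, because the interaction of the Gaussian factor $e^{-\beta|z|^2}$ with the dilation deserves to be made explicit.

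First I would drop the point-evaluation term $|f(i)|^p$ and work with the semi-norm
$$\int_{\up}\Big[|\partial_z f(z)|^p + |\partial_{\ol z} f(z)|^p\Big]\,w(\theta)\,\impart(z)^\alpha e^{-\beta|z|^2}\,dA(z),$$
which governs approximation. Using $\partial_z f_r(z) = r\,\partial_z f(rz)$ and the substitution $u = rz$ (under which $\impart(z) = \impart(u)/r$, $dA(z) = r^{-2}\,dA(u)$, and the argument is preserved so $w$ is unaffected), I would obtain
$$\int_{\up}|\partial_z f_r(z)|^p\,w(\theta)\,\impart(z)^\alpha e^{-\beta|z|^2}\,dA(z) = r^{\,p-\alpha-2}\int_{\up}|\partial_z f(u)|^p\,w(\theta)\,\impart(u)^\alpha e^{-\beta|u|^2/r^2}\,dA(u),$$
together with the identical identity for $\partial_{\ol z}$.

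Next, for $0 < r < 1$ one has $e^{-\beta|u|^2/r^2} \le e^{-\beta|u|^2}$, so each integrand is dominated by its value at $r=1$, which is integrable precisely because $f \in \mathcal{D}^p_{q,w}(\up)$. Letting $r \to 1^-$ and using $r^{\,p-\alpha-2} \to 1$, the dominated convergence theorem yields
$$\limsup_{r\to1^-}\int_{\up}|\partial_z f_r(z)|^p\,w(\theta)\,\impart(z)^\alpha e^{-\beta|z|^2}\,dA(z) \le \int_{\up}|\partial_z f(z)|^p\,w(\theta)\,\impart(z)^\alpha e^{-\beta|z|^2}\,dA(z),$$
and similarly for $\partial_{\ol z}$. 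Since $\partial_z f_r \to \partial_z f$ pointwise on $\up$, Fatou's lemma supplies the reverse inequality for the $\liminf$, so the $L^p$-norms against the measure $w(\theta)\,\impart(z)^\alpha e^{-\beta|z|^2}\,dA(z)$ actually converge; combined with pointwise convergence this forces the corresponding $L^p$-distances to tend to $0$ by the Brezis--Lieb / Radon--Riesz criterion. Adding the two terms gives $\|f_r - f\|_{\mathcal{D}^p_{q,w}(\up)} \to 0$.

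The step I expect to be the only genuinely delicate one is this last upgrade from limsup-domination of the norms, together with almost-everywhere convergence, to actual norm convergence; I would take care to justify it for the full range $0 < p < \infty$ rather than just $p \ge 1$. The hypothesis $\int_0^\pi w(\theta)\,d\theta < \infty$ plays a supporting role: writing $z = se^{\I\theta}$ and separating the angular and radial integrals, it guarantees that the measure $w(\theta)\,\impart(z)^\alpha e^{-\beta|z|^2}\,dA(z)$ is finite, so that the space is non-degenerate and the dominating function produced above is genuinely integrable; I would note this factorization explicitly rather than leave it implicit.
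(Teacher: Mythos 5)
Your proposal is correct and follows essentially the same route as the paper's proof: the identity $\partial_z f_r(z)=r\,\partial_z f(rz)$, the substitution $u=rz$ under which the angular weight is invariant, the domination $e^{-\beta|u|^2/r^2}\le e^{-\beta|u|^2}$, and the dominated convergence theorem to obtain the limsup inequality for each of the two semi-norm terms. Your two additions are both sound and in fact strengthen the exposition: the observation that an angular weight satisfies condition (\ref{condition1}) with $k=0$, $C=1$, making the result a special case of the earlier dilatation theorem for $\mathcal{D}^p_{q,w}(\up)$, and the explicit justification (Fatou for the reverse inequality, then Brezis--Lieb, valid for all $0<p<\infty$) of the passage from limsup-domination plus pointwise convergence to $\|f_r-f\|_{\mathcal{D}^p_{q,w}(\up)}\to 0$, a step the paper asserts with ``this entails'' but does not spell out.
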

\begin{proof}
We note that
\begin{align}\label{dir-up-ang-1}
\|f_r -f\|^p_{\mathcal{D}^p_{q,w}(\up)}&=\int_{\up}|\partial_zf_r(z)-\partial_zf(z)|^p w(z)\impart(z)^\alpha e^{-\beta |z|^2}dA(z)\nonumber \\
&+\int_{\up}|\partial_{\ol z}f_r(z)-\partial_{\ol z}f(z)|^p w(z)\impart(z)^\alpha e^{-\beta |z|^2}dA(z).
\end{align}
By replacing $z$ by $z/r$, and using the fact that
$\partial_zf_r(z)=r\partial_zf(rz),$ and $w(z)=w(z/r)$,
we obtain
\begin{align*}\int_{\up} |\partial_zf_r(z)|^pw(z)&\impart(z)^\alpha e^{-\beta |z|^2}dA(z)\\ &=
r^{p-(\alpha+2)}\int_{\up} |\partial_zf(z)|^p w\left(\frac{z}{r} \right)\impart(z)^\alpha e^{\frac{-\beta |z|^2}{r^2}}dA(z)\\
&\le r^{p-(\alpha+2)}\int_{\up} |\partial_zf(z)|^p w(z)\impart(z)^\alpha e^{-\beta |z|^2}dA(z),
\end{align*}
Therefore the dominated convergence theorem applies to ensure
$$\limsup_{r\to 1^-}\int_{\up} |\partial_zf_r(z)|^pw(z)\impart(z)^\alpha e^{-\beta |z|^2}dA(z)= \int_{\up} |\partial_zf(z)|^pw(z)\impart(z)^\alpha e^{-\beta |z|^2}dA(z).$$
This entails that the first term on the right-hand side of \eqref{dir-up-ang-1} tends to zero:
$$\lim_{r\to 1^-}\int_{\up}| \partial_zf_r(z)-\partial_zf(z)|^p w(z)\impart(z)^\alpha e^{-\beta |z|^2}dA(z)=0.$$
Similarly, one shows that the second term on the right-hand side of \eqref{dir-up-ang-1} tends to zero, from which we obtain
$$\lim_{r\to 1^-}\|f_r -f\|^p_{\mathcal{D}^p_{q,w}(\up)}=0.$$
\end{proof}

\begin{theorem}\label{radial-angular-main-up}
Let $w(se^{i\theta})=\omega(s)v(\theta)$ be an angular weight function on $\up$ where $\omega$ and $v$ satisfy
$$\int_0^{1}s\omega(s)ds <\infty,\,\, \int_0^{\pi}v(\theta)d\theta <\infty,$$
and $r^k\omega(s/r)\le C\omega(s)$ for some integer $k\ge 0$. Then for each $f\in \mathcal{B}^p_{q,w}(\mathbb{C^+}),\, 2\le p<\infty$, the dilatations $f_r$ converge to $f$ in norm.
\end{theorem}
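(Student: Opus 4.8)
The plan is to follow the template of the preceding Besov theorem on $\up$, the only new feature being the product structure $w(se^{i\theta})=\omega(s)v(\theta)$. First I would drop the constant term $|f(i)|^p$, which converges on its own since $f_r(i)=f(ri)\to f(i)$ by continuity of $f$, and work with the semi-norm associated to the measure $d\mu(z)=w(z)\impart(z)^{\alpha+p-2}e^{-\beta|z|^2}dA(z)$. The key preliminary remark is that the product weight already satisfies the scaling hypothesis (\ref{condition1}): writing $z=se^{i\theta}$ so that $z/r=(s/r)e^{i\theta}$, the assumption $r^k\omega(s/r)\le C\omega(s)$ yields
$$r^k w(z/r)=r^k\omega(s/r)v(\theta)\le C\omega(s)v(\theta)=Cw(z),$$
the angular factor $v(\theta)$ being untouched by the dilation $z\mapsto z/r$. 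Hence the machinery of the radial case carries over verbatim, with $v(\theta)$ simply riding along; only the scaling estimate and the finiteness $\int_{\up}|\partial_z f|^p\,d\mu<\infty$ (from $f\in\mathcal{B}^p_{q,w}(\up)$) will actually be used.

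Next I would use $\partial_z f_r(z)=r\partial_z f(rz)$ together with the substitution $z\mapsto z/r$, which maps $\up$ onto itself because $r>0$, to obtain
$$\int_{\up}|\partial_z f_r(z)|^p d\mu(z)=\frac{1}{r^{k+\alpha}}\int_{\up}|\partial_z f(z)|^p\,\big(r^k\omega(s/r)\big)v(\theta)\,\impart(z)^{\alpha+p-2}e^{-\beta|z|^2/r^2}dA(z).$$
For $r_0\le r<1$ the integrand is dominated by the $r$-independent, integrable function $C|\partial_z f(z)|^p w(z)\impart(z)^{\alpha+p-2}e^{-\beta|z|^2}$, using the scaling estimate above and the bound $e^{-\beta|z|^2/r^2}\le e^{-\beta|z|^2}$, valid for $0<r<1$ and $\beta\ge 0$. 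Since the integrand converges pointwise to $|\partial_z f(z)|^p w(z)\impart(z)^{\alpha+p-2}e^{-\beta|z|^2}$ as $r\to 1^-$ (assuming $\omega$ continuous), the dominated convergence theorem gives $\lim_{r\to1^-}\int_{\up}|\partial_z f_r|^p d\mu=\int_{\up}|\partial_z f|^p d\mu$.

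The final and only delicate step is to turn this convergence of norms into convergence of $\partial_z f_r$ to $\partial_z f$ in $L^p(d\mu)$, which the limit of norms does not supply by itself; this is where I expect the main obstacle. I would combine the pointwise convergence $\partial_z f_r(z)=r\partial_z f(rz)\to\partial_z f(z)$, valid by continuity of $\partial_z f$, with the convergence of the $L^p(d\mu)$ norms just established, and invoke the Radon--Riesz property of $L^p(d\mu)$ for $p\ge 2$ (equivalently the Brezis--Lieb lemma) to deduce $\int_{\up}|\partial_z f_r-\partial_z f|^p d\mu\to 0$. Running the identical argument for $\partial_{\ol z}f_r(z)=r\partial_{\ol z}f(rz)$ disposes of the second term, and restoring $|f_r(i)-f(i)|^p\to 0$ yields $\|f_r-f\|^p_{\mathcal{B}^p_{q,w}(\up)}\to 0$, as claimed.
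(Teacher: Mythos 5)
Your proof follows essentially the same route as the paper's: drop the constant term, change variables $z\mapsto z/r$, absorb the dilation into the radial factor via $r^k\omega(s/r)\le C\omega(s)$ with the angular factor $v(\theta)$ riding along unchanged, use $e^{-\beta|z|^2/r^2}\le e^{-\beta|z|^2}$, and apply dominated convergence to compare $\limsup_{r\to 1^-}$ of the dilated semi-norms with the semi-norm of $f$. If anything, your write-up is more careful than the paper's: the printed proof of this theorem mistakenly recycles the unit-disk formulas (integrals over $\disk$ with the kernel $(1-|z|^2)^{p-2}$ instead of $\impart(z)^{\alpha+p-2}e^{-\beta|z|^2}$ over $\up$) and jumps from the $\limsup$ inequality to norm convergence without comment, whereas you correctly instantiate the half-plane measure and make explicit the Radon--Riesz/Brezis--Lieb step (flagging honestly the mild continuity assumption on $\omega$ needed for the pointwise convergence of $r^k\omega(s/r)$, a point the paper also glosses over).
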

\begin{proof}
Again, we see that
\begin{align*}
\| f_r\|^p_{\mathcal{B}^p_{q,w}(\mathbb{C^+})}&=\int_{\disk}(1-|z|^2)^{p-2}\Big [|\partial_zf_r(z)|^p+|\partial_{\overline z} f_r(z)|^p\Big ]w(s)v(\theta)dA(z)\\
&=r^{p-2-k}\int_{r\disk}\left (\frac{r^2-|z|^2}{r^2}\right )^{p-2}\Big [|\partial_zf(z)|^p+|\partial_{\overline z} f(z)|^p\Big ] r^k w(s/r)v(\theta)dA(z)\\
&\le C r^{p-2-k}\int_{r\disk}\left (\frac{r^2-|z|^2}{r^2}\right )^{p-2}\Big [|\partial_zf(z)|^p+|\partial_{\overline z} f(z)|^p\Big ]w(s)v(\theta)dA(z).
\end{align*}
Therefore, the dominated convergence theorem applies;
$$\limsup_{r\to 1^-}\| f_r\|^p_{\mathcal{B}^p_{q,w}(\mathbb{C^+})}\le \| f\|^p_{\mathcal{B}^p_{q,w}(\mathbb{C^+})},$$
from which the result follows.
\end{proof}

%%%%%%%%%%%%%%%%%%%%%%%%%%%%%%%%%%%%%%%%%%%%%%%%%%%%%%%%%%%%%%%%%%%%%%%%%%%%%%%%%%%%%%%%%%%%%%%%%%%%%%%%%%%%%%%%%%%%%%%%%%%%%
%\noindent{\bf Data Availability Statement}.
%The author confirms that the data supporting the findings of this study are available within the article [and/or] its supplementary material.

\section{Declarations}

\noindent{\bf Ethical approval}\\
Not applicable\\

\noindent{\bf Competing interests}\\
The author declares no competing interests.\\

\noindent{\bf Authors contribution}\\
Not applicable\\

\noindent{\bf Funding}\\
Not applicable\\

\noindent{\bf Availability of data and materials} \\
Data sharing is not applicable to this article as no data sets were generated or analyzed during the current study.

\end{document}